\let\oldmarginpar\marginpar
\renewcommand\marginpar[1]{\-\oldmarginpar[\raggedleft\footnotesize #1]%
{\raggedright\footnotesize #1}}
\theoremstyle{plain}
\newtheorem{thm}[equation]{Theorem}
\newtheorem{lem}[equation]{Lemma}
\theoremstyle{definition}
\newtheorem{defn}[equation]{Definiton}
\theoremstyle{remark}
\newtheorem{open}[equation]{Question}
\numberwithin{equation}{section}
\newcommand{\R}{\mathbb{R}}
\newcommand{\Q}{\mathbb{Q}}
\newcommand{\Rn}{\mathbb{R}^n}
\renewcommand{\phi}{\varphi}
\renewcommand{\rho}{\varrho}
\renewcommand{\epsilon}{\varepsilon}
\renewcommand{\vartheta}{\theta}
\def\le{\leqslant}
\def\ge{\geqslant}
\def\diam{\qopname\relax o{diam}}
\def\px{{p(\cdot)}}
\def\phix{{\phi}}
\def\psix{{\psi}}
\def\Mf{{M\!f}}
\newcommand{\Phiw}{\Phi_{\rm{w}}}
\newcommand{\ainc}[1]{\hyperref[defn:aIncp]{{\normalfont(aInc){\ensuremath{_{#1}}}}}}
\newcommand{\adec}[1]{\hyperref[defn:aDecq]{{\normalfont(aDec){\ensuremath{_{#1}}}}}}
\newcommand{\inc}[1]{\hyperref[defn:Inc]{{\normalfont(Inc){\ensuremath{_{#1}}}}}}
\newcommand{\dec}[1]{\hyperref[defn:Dec]{{\normalfont(Dec){\ensuremath{_{#1}}}}}}
\newcommand{\azero}{\hyperref[defn:A0]{{\normalfont(A0)}}}
\newcommand{\aone}{\hyperref[defn:A1]{{\normalfont(A1)}}}
\newcommand{\atwo}{\hyperref[defn:A2N]{{\normalfont(A2)}}}
\newcommand{\aoneo}[1]{\hyperref[defn:A1Omega]{{\normalfont(A1){\ensuremath{_{#1}}}}}}
\date{\today}
\definecolor{blau}{rgb}{0.1,0.0,0.9}
\newcounter{komcounter}
\numberwithin{komcounter}{section}
\begin{document}

\title{Extension in generalized Orlicz spaces}

% Information for first author
\author{Petteri Harjulehto}
 % Address of record for the research reported here
 \address{Petteri Harjulehto,
 Department of Mathematics and Statistics,
FI-20014 University of Turku, Finland}
% Current address
\email{\texttt{petteri.harjulehto@utu.fi}}

\author{Peter Hästö}
 % Address of record for the research reported here
 \address{Peter Hästö, Department of Mathematics and Statistics,
FI-20014 University of Turku, Finland, and Department of Mathematics,
FI-90014 University of Oulu, Finland}
% Current address
\email{\texttt{peter.hasto@oulu.fi}}

% General info
\subjclass[2010]{46E30, 26B25}

%\date{\today}

%\dedicatory{This paper is dedicated to our advisors.}

\keywords{Generalized Orlicz space, Musielak--Orlicz space, Phi-function, extension}

\begin{abstract}
We prove that a $\Phi$-function can be extended from a domain $\Omega$ 
to all of $\Rn$ while preserving crucial properties for harmonic 
analysis on the generalized Orlicz space $L^\Phi$. 
\end{abstract}

\maketitle

%%%%%%%%%%%%%%%%%%%%%%%%%%%%%%%%%%%%%%%%%%%%%%%%%%%%%%%%
%%%%%%%%%%%%%%%%%%%%%%%%%%%%%%%%%%%%%%%%%%%%%%%%%%%%%%%%
\section{Introduction}

Generalized Orlicz spaces, also known as Musielak--Orlicz spaces, 
and related differential equations have been studied with increasing 
intensity recently, see, e.g., the references 
\cite{AhmCGY18, BarCM18, ByuO17, Chl18, CruH18, DefM_pp, EleMM18, Kar18, MizS_pp, OhnS_pp}
published since 2018. This year, we published the monograph 
\cite{HarH_book} in which we present a new framework for the basics 
of these spaces. In contrast to earlier studies, we emphasize properties 
which are invariant under equivalence of $\Phi$-functions. This 
means, in particular, that we replace convexity by the assumption 
that $\frac{\phi(t)}t$ be almost increasing. Within this framework 
we can more easily use techniques familiar from the $L^p$-context, 
see for instance the papers \cite{HarHL_pp, HasO_pp} with applications 
to PDE. 

In the pre-release version of the book \cite{HarH_book}, we had included 
a section on extension of the $\Phi$-function. However, this 
was removed from the final, published version, as we could not at that 
point prove a result with which we were satisfied. In this article we remedy 
this short-coming. 

In many places, extension offers an easy way to 
prove result in $\Omega\subset \Rn$ from results in $\Rn$. For example, 
in variable exponent spaces we may argue 
\[
\| \Mf \|_{L^\px(\Omega)} 
\le 
\| \Mf \|_{L^\px(\Rn)} 
\le 
\| f \|_{L^\px(\Rn)} 
=
\| f \|_{L^\px(\Omega)} 
\]
where $f$ is a zero extension of $f:\Omega\to \R$ to $\Rn$, provided that 
$p:\Omega\to [1,\infty]$ can be extended as well (cf.\ \cite[Proposition~4.1.17]{DieHHR11}) and the maximal operator is bounded in $\Rn$. 

In generalized Orlicz spaces this is not as simple. 
In \cite{HarH_book} we showed that many results of harmonic analysis 
hold in $L^\phi$ provided $\phi$ satisfies conditions \azero{}, \aone{} and \atwo{} (see the 
next section for definitions).
However, the extension requires a stronger condition than \aone{}, namely \aoneo{\Omega}, 
which is known to be equivalent with \aone{} only in quasiconvex domains (Lemma~\ref{lem:quasiconvex}). 
Furthermore, we show in the main result of this paper, Theorem~\ref{thm:extension}, that \aoneo{\Omega} 
also is necessary. Thus we solve the extension problem for the assumptions of \cite{HarH_book}. 
In some recent papers, e.g. \cite{HasO_pp}, also stronger assumptions have been used to 
obtain higher regularity. Whether the extension can be chosen to preserve 
those conditions remains an open problem. 

%%%%%%%%%%%%%%%%%%%%%%%%%%%%%%%%%%%%%%%%%%%%%%%%%%%%%%%%%%%%%%%%%%%%%%%%
\section{Preliminaries}

In this section we introduce the terminology and auxiliary results needed in this paper. 
The notation $f\lesssim g$ means that there exists a constant
$C>0$ such that $f\le C g$. 
The notation $f\approx g$ means that
$f\lesssim g\lesssim f$.

\begin{defn}
A function $g: (0, \infty) \to \R$ is \emph{almost increasing} if there exists a constant $a\ge 1$ such that
$g(s) \le a g(t)$ for all $0<s<t$. \emph{Almost decreasing} is defined analogously.
\end{defn}

Increasing and decreasing functions are included in the 
previous definition as the special case $a=1$.

\begin{defn}
%\label{defn:aInc0p}\label{defn:aDec0q}\label{defn:Inc0p}\label{defn:Dec0q}\label{defn:aInco}
%\label{defn:aInc0}\label{defn:aDec0}
\label{defn:aIncp}\label{defn:aDecq}\label{defn:aInc}\label{defn:aDec}
\label{defn:Dec}\label{defn:Inc}
Let $f: \Omega \times [0, \infty)\to \R$ and $p,q>0$.  We say that $f$ satisfies
\begin{itemize}%[leftmargin=1.4cm]
\item[(Inc)$_p$] if $\frac{f(x,t)}{t^p}$ is increasing; 
\item[(aInc)$_p$] if $\frac{f(x,t)}{t^p}$ is almost increasing; 
\item[(Dec)$_q$] if $\frac{f(x,t)}{t^q}$ is decreasing; 
\item[(aDec)$_q$] if $\frac{f(x,t)}{t^q}$ is almost decreasing;
\end{itemize}
all conditions should hold for almost every $x\in\Omega$ and the 
almost increasing/decreasing constant should be independent of $x$.
%We say that $f$ satisfies (aInc), (Inc), (aDec) or (Dec) if there exist 
%$p > 1$ or $q<\infty$ such that $f$ satisfies 
%(aInc)$_p$, (Inc)$_p$, (aDec)$_q$ or (Dec)$_q$, respectively. 
\end{defn}

%\begin{defn}\label{defn:aIncp}\label{defn:aDecq}\label{defn:aInc}\label{defn:aDec}
%\label{defn:Dec}\label{defn:Inc}
%Let $f: \Omega \times [0, \infty)\to \R$ and $p,q>0$. 
%We say that $f$ satisfies (aInc)$_p$ or (aDec)$_q$, if there exists $a\ge 1$ such 
%that the function $t\mapsto f(x,t)$ satisfies \painc{p} or \padec{q} 
%with a constant $a$, respectively, for almost every $x\in A$. 
%When $a=1$, we use the notation \pinc{p} and \pdec{q}. 
%\end{defn}

Suppose that $\phi$ satisfies \ainc{p_1}.
Then it satisfies \ainc{p_2} for $p_2<p_1$ and it does not satisfy \adec{q} for $q<p_1$.  
Likewise, if $\phi$ satisfies \adec{q_1},
then it satisfies \adec{q_2} for $q_2>q_1$ and it does not satisfy \ainc{p} for $p>q_1$.
 
\begin{defn}
\label{def:phi_func}
Let $\phi\colon [0,\infty) \to [0,\infty]$ be increasing 
with $\phi(0)=0$, $\lim_{t \to 0^+} \phi(t)= 0$ and $\lim_{t \to \infty} \phi(t) =
\infty$. We say that such $\phi$ is a
({\em weak}) {\em $\Phi$-function} if it satisfies \ainc{1} on $(0, \infty)$.
The set of weak $\Phi$-functions is denoted by $\Phiw$.
\end{defn}

We mention the epithet ``weak'' only when special emphasis is needed. 
Note that when we speak about $\Phi$-functions, we mean the weak $\Phi$-functions 
of the previous definition, whereas many other authors use this term for 
convex $\Phi$-functions, possibly with additional assumptions as well. 
If $\phi$ is convex and $\phi(0)=0$, then we obtain for $0<s<t$ that
\begin{equation}\label{eq:convex=>inc}
\phi(s) = \phi\Big(\frac st t + 0 \Big)\le 
\frac st \phi(t) + \Big(1-\frac st \Big)\phi(0)=\frac st \phi(t),
\end{equation}
i.e.\ \inc{1} holds. 

\begin{defn} \label{def:class_Phi}
A function $\phi\colon \Omega \times [0,\infty) \to [0,\infty]$ is
said to be a ({\em generalized weak}) {\em $\Phi$-function}, denoted $\phi\in\Phiw (\Omega)$, 
if $x \mapsto \phi(y,|f(x)|)$ is measurable for every measurable $f$,
$\phi(y,\cdot)$ is a weak $\Phi$-function for almost every $y\in \Omega$
and $\phi$ satisfies \ainc{1}.
\end{defn}

Unless there is danger of confusion, we will drop the word ``generalized''.
Note that if  $x \mapsto \phi(y,t)$ is measurable for every $t \ge 0$ and $t \mapsto \phi(x,t)$ is 
left-continuous for almost every $x\in \Omega$, then  $x \mapsto \phi(y,|f(x)|)$ is measurable for every measurable $f$, \cite[Theorem~2.5.4]{HarH_book}.

Two functions $\phi$ and $\psi$ are \textit{equivalent}, 
$\phi\simeq\psi$, if there exists $L\ge 1$ such that 
$\phi(x, \frac tL)\le \psi(x, t)\le \phi(x, Lt)$ for all $x$ and all $t \ge 0$. 
Short calculations show that $\simeq$ is an equivalence relation.
Note that if $\phi\simeq\psi$, then $L^\phix (\Omega)=L^\psix(\Omega)$,
see Theorem 3.2.6 in \cite{HarH_book}. 

Let us define a left-inverse of $\phi$ by
\[
\phi^{-1}(x, \tau) := \inf\{t \ge 0: \phi(x, t)\ge \tau\}. 
\]
Note that $\phi^{-1}$ is left-continuous when $\phi$ is increasing and $\phi(0) =0$. Moreover if $\phi \in \Phiw$, then 
$\phi$ satisfies \ainc{p}
if and only if 
$\phi^{-1}$ satisfies \adec{1/p}; and
$\phi$ satisfies \adec{q}
if and only if 
$\phi^{-1}$ satisfies \ainc{1/q}.
These and other properties can be found in \cite[Chapter~2.3]{HarH_book}.

\begin{defn}\label{defn:A0}  \label{defn:A1}  \label{defn:A2N}
Let $\phi \in \Phiw(\Omega)$. We define three conditions:
\begin{itemize}
\item[(A0)] 
There exists $\beta \in(0, 1]$ such that $ \beta \le \phi^{-1}(x,1) \le \frac1{\beta}$ for almost every $x \in \Omega$.
\item[(A1)] 
There exists $\beta\in (0,1)$ such that
\[
\beta \phi^{-1}(x, t) \le \phi^{-1} (y, t)
\]
for every $t\in [1,\frac 1{|B|}]$, almost every $x,y\in B \cap \Omega$ and 
every ball $B$ with $|B|\le 1$.
\item[(A2)] 
For every $s>0$ there exist $\beta\in(0,1]$ and $h\in L^1(\Omega) \cap L^\infty (\Omega)$ 
such that 
\[
\beta \phi^{-1}(x,t) 
\le 
\phi^{-1}(y,t)
\]
for almost every $x,y\in \Omega$ and every $t\in [h(x)+h(y), s]$.
\end{itemize}
\end{defn}

By \cite[Lemma~4.2.7]{HarH_book} \atwo{} is equivalent with the following condition:
there exist $\phi_\infty\in \Phiw$, 
$h\in L^1(\Omega) \cap L^\infty (\Omega)$, $s>0$ and $\beta\in (0,1]$ such that 
\begin{equation}\label{equ:A2'}
\phi(x,\beta t) \le \phi_\infty(t) + h(x)
\quad\text{and}\quad
\phi_\infty(\beta t) \le \phi(x,t) + h(x)
\end{equation}
for almost every $x\in\Omega$.
%Suppose that $h$ in the definition satisfies $\lim_{|x|\to \infty} |h(x)| = 0$. 
%Then 
%\[
%\phi^+_\infty(t) := \limsup_{|x|\to \infty} \phi(x,t) \simeq 
%\liminf_{|x|\to \infty} \phi(x,t) =: \phi^-_\infty(t).
%\]
%By Lemma~2.5.8 of \cite{HarH_book}, $\phi^{\pm}_\infty \in \Phiw(\Omega)$ provided that 
%they are non-degenerate; this holds for example if \azero{} holds.
%In this case we may take either of these functions for $\phi_\infty$. In this sense $\phi_\infty$ 
%is the limit of $\phi$ as $x\to \infty$.

Note how the conditions \azero{}--\atwo{} are formulated in terms of the inverse 
function $\phi$. This turns out to be very convenient in many cases, since 
the appropriate range of $t$ for which the comparison can be done is easily 
expressed for the inverse function. We also use the inverse function for the extension. 
In \cite[Proposition~2.5.14]{HarH_book}, we showed that $f$ is the inverse of 
some $\Phi$-function if and only if it satisfies the following conditions:
\begin{enumerate}
\item
$f$ is increasing;
\item
$f$ is left-continuous;
\item
$f$ satisfies \adec{1};
\item
$f(t) = 0$ if and only if $t = 0$,
and, $f(t) = \infty$ if and only if $t = \infty$;
\item
$x\mapsto f(x,t)$ is measurable for all $t\ge 0$
\end{enumerate}

%%%%%%%%%%%%%%%%%%%%%%%%%%%%%%%%%%%%%%%%%%%%%%%%%%%%%%%%%%%%%%%%%%%%%%%%
%%%%%%%%%%%%%%%%%%%%%%%%%%%%%%%%%%%%%%%%%%%%%%%%%%%%%%%%%%%%%%%%%%%%%%%%

\section{Extension}
\label{sect:extension}

In the following version of \aone{} we can use any size of $t$, but have to pay 
in terms of a smaller constant for large $t$. 

\begin{defn}\label{defn:A1Omega}
Let $\Omega \subset \Rn$. We say that $\phi \in \Phiw(\Omega)$ satisfies
\aoneo{\Omega}, if  there exist a constant $\beta \in(0,1]$  such that
$\beta^{|x-y| t^{1/n} +1} \phi^{-1}(y, t) \le  \phi^{-1}(x, t)$
for all $x, y \in \Omega$ and $t\ge 1$.
\index{generalized Phi-function@generalized $\Phi$-function!(A1z)@\aoneo{\Omega}}%
\index{(A1z)@\aoneo{\Omega}}
\end{defn}

By Theorem~2.3.6 of \cite{HarH_book} we have $\phi \simeq \psi$ if and only if $\phi^{-1} \approx \psi^{-1}$. Hence we obtain the following lemma.

\begin{lem}\label{lem:A1-Omega-invariance}
The condition \aoneo{\Omega} is invariant under equivalence of weak $\Phi$-functions.
\end{lem}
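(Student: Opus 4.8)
The plan is to unwind the definition of \aoneo{\Omega} and feed it through the stated equivalence criterion ``$\phi \simeq \psi$ if and only if $\phi^{-1} \approx \psi^{-1}$'' (Theorem~2.3.6 of \cite{HarH_book}, quoted just before the lemma). So suppose $\phi, \psi \in \Phiw(\Omega)$ with $\phi \simeq \psi$, and suppose $\phi$ satisfies \aoneo{\Omega} with constant $\beta$. We must produce a constant $\tilde\beta \in (0,1]$ such that $\tilde\beta^{|x-y|t^{1/n}+1}\psi^{-1}(y,t) \le \psi^{-1}(x,t)$ for all $x,y\in\Omega$ and $t \ge 1$.

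First I would record the two facts we get to use. From $\phi \simeq \psi$ and the cited theorem we get a constant $A \ge 1$ with $A^{-1}\phi^{-1}(x,t) \le \psi^{-1}(x,t) \le A\,\phi^{-1}(x,t)$ for all $x$ and all $t \ge 0$. From \aoneo{\Omega} for $\phi$ we have $\beta^{|x-y|t^{1/n}+1}\phi^{-1}(y,t)\le\phi^{-1}(x,t)$. Then the computation is the obvious sandwich:
\[
\psi^{-1}(x,t) \ge A^{-1}\phi^{-1}(x,t) \ge A^{-1}\beta^{|x-y|t^{1/n}+1}\phi^{-1}(y,t) \ge A^{-2}\beta^{|x-y|t^{1/n}+1}\psi^{-1}(y,t).
\]
So the only remaining point is to absorb the constant $A^{-2}$ into the exponential factor. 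Since the exponent $|x-y|t^{1/n}+1$ is always $\ge 1$, and $0 < \beta \le 1$, we have $\beta^{|x-y|t^{1/n}+1} \le \beta$ whenever $\beta < 1$; more usefully, choosing $\tilde\beta := \min\{\beta, A^{-2}\} \in (0,1]$ (and noting $A^{-2}\le 1$), one checks $A^{-2}\beta^{|x-y|t^{1/n}+1} \ge \tilde\beta\cdot\tilde\beta^{|x-y|t^{1/n}+1-1}\cdot\text{(stuff)}$ — that is, $A^{-2}\beta^{r} \ge \tilde\beta^{r}$ for every $r \ge 1$, because $\tilde\beta^{r} = \tilde\beta\cdot\tilde\beta^{r-1} \le A^{-2}\cdot\beta^{r-1}$ using $\tilde\beta \le A^{-2}$ for the first factor and $\tilde\beta\le\beta$, $r-1\ge0$ for the second. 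Hence $\psi^{-1}(x,t)\ge\tilde\beta^{|x-y|t^{1/n}+1}\psi^{-1}(y,t)$, which is \aoneo{\Omega} for $\psi$. By symmetry of $\simeq$ the converse implication holds as well, so the condition is invariant.

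The step I expect to need the most care — though it is still elementary — is exactly that last absorption of the multiplicative constant $A^{-2}$ into the base of the exponential, since one must use that the exponent is bounded below by $1$ (which is why the ``$+1$'' in the definition of \aoneo{\Omega} matters) rather than being allowed to approach $0$. Everything else is a direct substitution. I would present the argument in one short paragraph with the displayed chain of inequalities above, remark that the ``$+1$'' guarantees $r \ge 1$, and close by invoking symmetry.
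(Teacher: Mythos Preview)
Your approach is exactly what the paper intends (its ``proof'' is just the sentence preceding the lemma, citing Theorem~2.3.6), and the sandwich chain is correct. However, your absorption step contains an arithmetic slip: with $\tilde\beta := \min\{\beta, A^{-2}\}$ the claimed inequality $A^{-2}\beta^{r} \ge \tilde\beta^{r}$ is false in general. If $\beta \le A^{-2}$ then $\tilde\beta=\beta$ and the inequality becomes $A^{-2}\beta^{r}\ge\beta^{r}$, i.e.\ $A^{-2}\ge 1$, which fails whenever $A>1$. What your justification actually establishes is $\tilde\beta^{r}\le A^{-2}\beta^{r-1}$, and since $\beta^{r-1}\ge\beta^{r}$ for $\beta\le 1$ this does not imply the bound you need.

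The repair is immediate and your diagnosis of the role of the ``$+1$'' is exactly right: take instead $\tilde\beta := A^{-2}\beta \in (0,1]$. Then for every $r\ge 1$ one has $\tilde\beta^{r} = A^{-2r}\beta^{r} \le A^{-2}\beta^{r}$ because $A\ge 1$ and $r\ge 1$, and your displayed chain finishes as intended. With this correction the proof is complete and matches the paper's (implicit) argument.
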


A domain $\Omega \subset \Rn$ is \emph{quasi-convex}, if there exists a constant $K \ge 1$ such that every pair $x, y \in \Omega$ can be connect by  a rectifiable path $\gamma \subset \Omega$ with the
length $\ell(\gamma) \le K|x-y|$. 
\index{quasi-convex!domain}

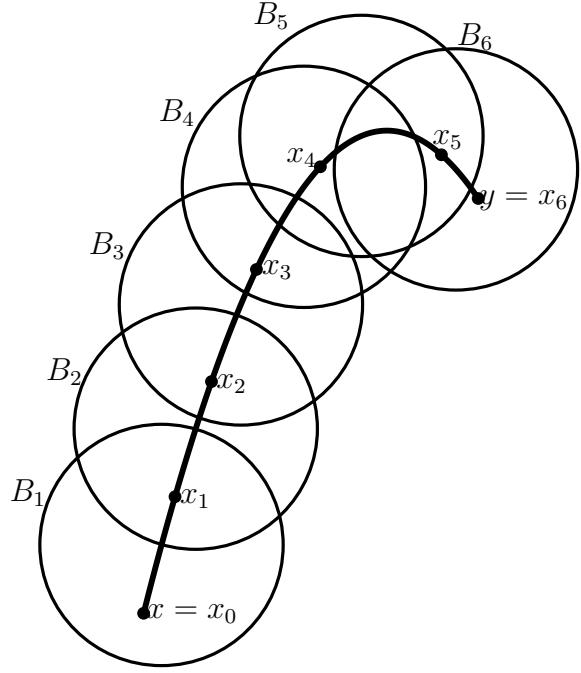
\begin{figure}[ht!]
\begin{center}
\begin{tikzpicture}[line cap=round,line join=round,>=triangle 45,x=1.0cm,y=1.0cm,thick,scale=0.8, every node/.style={scale=1}]
\clip(-5.302968243890198,-5.611321526129977) rectangle (4.357169835778602,6.067634472532111);
\draw [line width=1.2pt] (-2.7061395348837207,-3.367735126014061) circle (2.cm);
\draw [line width=1.2pt] (-2.143348837209302,-1.4403209561925348) circle (2.cm);
\draw [line width=1.2pt] (-1.4004651162790693,0.618883612763657) circle (2.cm);
\draw [line width=1.2pt] (-0.3649302325581387,2.5684827301243924) circle (2.cm);
\draw [line width=1.2pt] (0.5805581395348848,3.4120342628447813) circle (2.cm);
\draw [line width=1.2pt] (2.1338604651162805,2.857180222823146) circle (2.cm);
\draw[line width=2.2pt,color=black,smooth,samples=100,domain=-3.0:2.5] plot(\x,-0.5*\x*\x+\x+3);
\draw [fill=black] (-3.,-4.5) circle (2.5pt);
\draw[color=black] (-2.2,-4.5) node {$x=x_0$};
\draw [fill=black] (-1.8843392856996477,-0.659706557515177) circle (2.5pt);
\draw[color=black] (-1.55,-0.7) node {$x_2$};
\draw [fill=black] (-2.4836220370137783,-2.5678112483840128) circle (2.5pt);
\draw[color=black] (-2.15,-2.6) node {$x_1$};
\draw [fill=black] (-0.09484690811690388,2.900655123893428) circle (2.5pt);
\draw[color=black] (-0.4,3.0582750512290366) node {$x_4$};
\draw [fill=black] (-1.145341162409211,1.198755648436348) circle (2.5pt);
\draw[color=black] (-0.8,1.2) node {$x_3$};
\draw [fill=black] (1.8952690262614031,3.0992466853084792) circle (2.5pt);
\draw[color=black] (2.0,3.35) node {$x_5$};
\draw [fill=black] (2.5,2.375) circle (2.5pt);
\draw[color=black] (3.25,2.35) node {$y=x_6$};
\draw[color=black] (-4.9,-2.5) node {$B_1$};
\draw[color=black] (-4.3,-0.5) node {$B_2$};
\draw[color=black] (-3.6,1.6) node {$B_3$};
\draw[color=black] (-2.45,3.8) node {$B_4$};
\draw[color=black] (-0.9,5.4) node {$B_5$};
\draw[color=black] (2.4515566599064362,5.0582255130354685) node {$B_6$};
\end{tikzpicture}
\caption{Points $x_j$ and balls $B_j$ in the proof of Lemma~\ref{lem:quasiconvex}.}\label{fig:quasiconvex}
\end{center}
\end{figure}

\begin{lem}\label{lem:quasiconvex}
If $\Omega\subset \Rn$ is quasi-convex, 
then $\phi\in \Phiw(\Omega)$ satisfies \aone{} if and only if it satisfies \aoneo{\Omega}.
\end{lem}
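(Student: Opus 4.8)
The plan is to prove the two implications of the equivalence separately, noting at the outset that quasi-convexity enters only in \aone{}\,$\Rightarrow$\,\aoneo{\Omega}; the reverse implication holds for an arbitrary domain.

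First I would dispose of \aoneo{\Omega}\,$\Rightarrow$\,\aone{}. Let $B$ be a ball with $|B|\le1$, let $x,y\in B\cap\Omega$, and let $t\in[1,\tfrac1{|B|}]$. Writing $\omega_n=|B(0,1)|$, the diameter of $B$ equals $2(|B|/\omega_n)^{1/n}$, so $|x-y|\le 2(|B|/\omega_n)^{1/n}$, while $t\le\tfrac1{|B|}$ gives $t^{1/n}\le|B|^{-1/n}$; multiplying these two bounds, $|x-y|\,t^{1/n}\le 2\omega_n^{-1/n}=:c_n$, a dimensional constant. Since the constant $\beta$ furnished by \aoneo{\Omega} satisfies $\beta\le1$, we may replace the exponent $|x-y|t^{1/n}+1$ by the larger $c_n+1$ and conclude $\beta^{c_n+1}\phi^{-1}(y,t)\le\phi^{-1}(x,t)$, which is \aone{} with constant $\beta^{c_n+1}$.

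The substance of the lemma is the converse. Assume \aone{} with constant $\beta_0\in(0,1)$ and that $\Omega$ is $K$-quasi-convex; fix $x,y\in\Omega$ and $t\ge1$. I would join $x$ to $y$ by a rectifiable path $\gamma\subset\Omega$ with $\ell(\gamma)\le K|x-y|$, parametrized by arclength, and then calibrate the ball radius to $t$ by setting $\rho:=(2\omega_n t)^{-1/n}$, so that every ball of radius $\rho$ has measure $\tfrac1{2t}\le1$ and has $t$ inside its admissible range $[1,\tfrac1{|B|}]=[1,2t]$. Subdividing $\gamma$ into $N:=\lfloor\ell(\gamma)/\rho\rfloor+1$ subarcs of equal length, necessarily $<\rho$, produces points $x=x_0,x_1,\dots,x_N=y$ with $|x_{j-1}-x_j|<\rho$, hence $x_{j-1},x_j\in B_j:=B(x_{j-1},\rho)$, and
\[
N\le \frac{K|x-y|}{\rho}+1 = K(2\omega_n)^{1/n}\,|x-y|\,t^{1/n}+1 \le C\,\big(|x-y|\,t^{1/n}+1\big),
\]
where $C:=\max\{K(2\omega_n)^{1/n},1\}$. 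Applying \aone{} in each $B_j$ yields $\beta_0\,\phi^{-1}(x_j,t)\le\phi^{-1}(x_{j-1},t)$, and chaining these inequalities for $j=1,\dots,N$ gives $\phi^{-1}(x,t)\ge\beta_0^{\,N}\phi^{-1}(y,t)$. Since $\beta_0\le1$, the bound on $N$ then gives $\phi^{-1}(x,t)\ge\beta^{|x-y|t^{1/n}+1}\phi^{-1}(y,t)$ with $\beta:=\beta_0^{\,C}\in(0,1]$, i.e.\ \aoneo{\Omega}. The points $x_j$ and balls $B_j$ are precisely those drawn in Figure~\ref{fig:quasiconvex}.

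The step I expect to be the crux is getting the exponent right, and the calibration $\rho\approx t^{-1/n}$ is what achieves it: it is the coarsest scale at which \aone{} still controls the comparison at the given $t$, so the chain needs a number of balls proportional to $\ell(\gamma)\,t^{1/n}$, and quasi-convexity is exactly the ingredient that turns $\ell(\gamma)$ into $|x-y|$. One technical point I would flag and then relegate to a remark: \aone{} asserts the comparison only for almost every pair of points, so rigorously one should first pass to a full-measure set $G\subset\Omega$ on which all the relevant comparisons hold at once --- obtained by intersecting over the countably many balls with rational centres and radii --- then perturb the interior points $x_1,\dots,x_{N-1}$ slightly so that they lie in $G$ while keeping $|x_{j-1}-x_j|<\rho$, and finally reduce to $t$ ranging over a countable dense set using the left-continuity of $\phi^{-1}(x,\cdot)$; none of this affects the constants.
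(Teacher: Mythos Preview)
Your proof is correct and follows essentially the same approach as the paper's: for \aoneo{\Omega}$\Rightarrow$\aone{} you bound $|x-y|\,t^{1/n}$ by a dimensional constant, and for the converse you chain \aone{} along a quasi-geodesic using balls calibrated so that $|B|\approx 1/t$, giving $O(|x-y|\,t^{1/n})$ steps. Your closing remark on the ``almost every'' technicality is a legitimate refinement that the paper's proof leaves implicit.
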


\begin{proof}
Assume first that \aoneo{\Omega} holds. Let $B$ be a ball with $|B| \le 1$ and 
$x, y \in \Omega \cap B$. Since $|x-y|\le \diam(B)$, $|x-y|t^{\frac1n} \le c(n)$
for $t \in[1, \tfrac{1}{|B|}]$. Hence \aone{} holds with constant $\beta^{c(n)+1}$.

Assume then that \aone{} holds.
Let $x, y\in\Omega$, $t\ge 1$
and $\gamma\subset\Omega$ be a path connecting $x$ and $y$ 
of length at most $K\,|x-y|$. Let $x_0:=x$ and $\omega_n$ be the measure of the unit ball. Choose points $x_j \in \gamma$ such that 
$\ell(\gamma(x,x_j)) = \frac{j}{( \omega_n t)^{1/n}}$ for $j=1,\ldots, k-1$ 
when possible and finally set $x_{k}=y$. 
Then $|x_{j-1}-x_{j}| \le\frac{1}{(\omega_n t)^{1/n}}$ for all $j$. Let $B_{j+1}$ be an open ball 
such that $x_j, x_{j+1} \in B_{j+1}$ and $\diam(B_{j+1}) = 2 |x_j-x_{j+1}|$, see Figure~\ref{fig:quasiconvex}. 
Then $\frac{1}{|B|} = \frac{1}{\omega_n |x_j-x_{j+1}|^n} \ge t$.
Thus $t$ is in the allowed range for \aone{} and so 
$\beta\phi^{-1}(x_{j+1}, t) \le  \phi^{-1}(x_j, t)$. With this chain of inequalities, we obtain 
that $\beta^k \phi^{-1}(y, t) \le  \phi^{-1}(x, t)$. 
On the other hand, at most
\[
k = \frac{\ell (\gamma(x,x_{k-1}))}{\diam(B)}+1 \le \frac{K|x-y|}{\diam(B)} +1 \le \frac{K|x-y|}{2/ ( \omega_n t)^{1/n}} +1 = c' K t^{1/n}\, |x-y| +1
\] 
points $x_j$ are needed, 
so that $\beta^{c' K t^{1/n} |x-y|+1} \phi^{-1}(y, t) \le  \phi^{-1}(x, t)$ 
for all $x, y \in \Omega$ and $t\ge1$.
\end{proof}

\begin{open}
Does there exist $\Omega\subset\Rn$ and $\phi\in\Phiw(\Omega)$ such 
that \aone{} holds but \aoneo{\Omega} does not?
\end{open}

We say that $\psi\in \Phiw(\Rn)$ is an \textit{extension} of 
$\phi\in \Phiw(\Omega)$ if $\psi|_\Omega \simeq \phi$. 
Since we consider properties which hold up to equivalence of $\Phi$-functions 
this is a natural definition. However, if one wants identity in $\Omega$ 
this is easily achieved by choosing 
$\psi_2 := \phi \chi_\Omega + \psi \chi_{\Rn\setminus\Omega}$, which is 
equivalent to $\psi$ and hence has the same properties.

The next theorem was proved in \cite[Proposition~5.2]{HarHK16} but the proof was incorrect: 
the function $f$ constructed was not increasing, its measurability was unclear, and 
$\psi|_\Omega \simeq \phi$ was shown only for $t \ge 1$.

\begin{thm}\label{thm:extension}
Suppose that $\Omega\subset \Rn$ and $\phi\in \Phiw(\Omega)$.
Then there exists an extension $\psi\in \Phiw(\Rn)$ of $\phi$ which
satisfies \azero, \aone{} and \atwo{}, if and only if $\phi$ satisfies \azero, \aoneo{\Omega}
and \atwo{}.

If $\phi$ satisfies \ainc{p} and/or \adec{q}, then the extension can be 
taken to satisfy it/them, as well.
\end{thm}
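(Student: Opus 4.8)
The plan is to work entirely at the level of the inverse function $\phi^{-1}$, using the characterization of inverses of $\Phi$-functions quoted from \cite[Proposition~2.5.14]{HarH_book}: we construct a function $f\colon\Rn\times[0,\infty)\to[0,\infty]$ which is increasing and left-continuous in $t$, satisfies \adec{1} in $t$, vanishes exactly at $t=0$ and is infinite exactly at $t=\infty$, and is measurable in $x$; then $\psi:=f^{-1}$ (left-inverse in $t$) is the required extension, and the conditions \azero, \aone, \atwo{} on $\psi$ are read off from the corresponding conditions on $f$ via the dictionary in Section~2 (e.g. \azero{} for $\psi$ $\Leftrightarrow$ $\beta\le f(x,1)\le 1/\beta$; \ainc{p} for $\psi$ $\Leftrightarrow$ \adec{1/p} for $f$, etc.).

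\textbf{Necessity.} Suppose $\psi\in\Phiw(\Rn)$ is an extension satisfying \azero{}--\atwo. Then $\psi$ satisfies \aone{} and $\Rn$ is (trivially) quasi-convex, so by Lemma~\ref{lem:quasiconvex} $\psi$ satisfies \aoneo{\Rn}; restricting the inequality $\beta^{|x-y|t^{1/n}+1}\psi^{-1}(y,t)\le\psi^{-1}(x,t)$ to $x,y\in\Omega$ and using $\psi^{-1}\approx\phi^{-1}$ on $\Omega$ (from $\psi|_\Omega\simeq\phi$, via Theorem~2.3.6 of \cite{HarH_book}) gives \aoneo{\Omega} for $\phi$, with a slightly worse constant. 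Conditions \azero{} and \atwo{} are restriction-stable in the same way: \azero{} is immediate, and for \atwo{} one restricts the defining inequality and the auxiliary function $h$ to $\Omega$.

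\textbf{Sufficiency — the construction.} This is the heart of the matter. Set $g(x,t):=\phi^{-1}(x,t)$ for $x\in\Omega$ and define, for $x\in\Rn$ and $t\ge 1$,
\[
f(x,t):=\inf_{y\in\Omega}\beta^{-\,|x-y|\,t^{1/n}}\,\phi^{-1}(y,t),
\]
and extend to $0\le t<1$ by $f(x,t):=t\,f(x,1)$ (this linear piece forces left-continuity at and below $1$ and is consistent with \adec{1}). The motivation is exactly \aoneo{\Omega}: for $x\in\Omega$ the factor $\beta^{-|x-y|t^{1/n}}$ is at least $\beta^{-|x-x|t^{1/n}}=1$ at $y=x$ and, by \aoneo{\Omega}, $\beta^{-|x-y|t^{1/n}}\phi^{-1}(y,t)\ge\beta\,\phi^{-1}(x,t)$ for every $y$, so $f(x,t)\approx\phi^{-1}(x,t)$ on $\Omega$ — this is what makes $\psi=f^{-1}$ an extension. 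For general $x\in\Rn$, the same computation along a (trivial, straight-line) path shows $\beta^{|x-z|t^{1/n}}f(z,t)\le f(x,t)\le \beta^{-|x-z|t^{1/n}}f(z,t)$, i.e. $f$ itself satisfies the \aoneo{\Rn}-type inequality, which translates into \aone{} for $\psi$ by Lemma~\ref{lem:quasiconvex}. One then checks, in order: (i) $t\mapsto f(x,t)$ is increasing — because each $\phi^{-1}(y,\cdot)$ is and $t^{1/n}$ is increasing while the exponent $-|x-y|t^{1/n}$ makes $\beta^{-|x-y|t^{1/n}}$ increasing since $\beta<1$; (ii) \adec{1}, i.e. $t\mapsto f(x,t)/t$ decreasing — this uses \adec{1} of each $\phi^{-1}(y,\cdot)$ together with the fact that $\beta^{-|x-y|t^{1/n}}/t$ is \emph{not} obviously decreasing, so here one must instead argue that $f$ satisfies \adec{1} because it is a pointwise infimum of functions each satisfying \adec{1} with a uniform constant — infima preserve \adec{q} with the same constant; (iii) left-continuity in $t$, from the linear piece below $1$ and, above $1$, from continuity of $t\mapsto\beta^{-|x-y|t^{1/n}}$ and left-continuity of $\phi^{-1}(y,\cdot)$, together with a uniform-convergence/monotonicity argument to pass the limit through the infimum; (iv) $f(x,t)=0$ iff $t=0$ and $f(x,t)=\infty$ iff $t=\infty$, using \azero{} of $\phi$ to bound $\phi^{-1}(y,1)$ away from $0$ and $\infty$ uniformly in $y$, hence the same for $f(x,1)$, hence for all $t$; (v) measurability of $x\mapsto f(x,t)$ — write the infimum over a countable dense subset of $\Omega$ (legitimate by continuity of $x\mapsto|x-y|$ and a suitable selection/separability argument for $\phi^{-1}(\cdot,t)$), so $f(\cdot,t)$ is a countable infimum of measurable functions. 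Then $\psi:=f^{-1}$ lies in $\Phiw(\Rn)$ by \cite[Proposition~2.5.14]{HarH_book}, $\psi|_\Omega\simeq\phi$ by (i)–(ii) of the previous paragraph, and \azero, \aone{} follow as indicated.

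\textbf{Condition \atwo{} and the $\ainc{p}/\adec{q}$ addendum.} For \atwo{} we use the equivalent formulation \eqref{equ:A2'}: there are $\phi_\infty\in\Phiw$, $h\in L^1\cap L^\infty(\Omega)$, $s>0$, $\beta_0\in(0,1]$ with $\phi(x,\beta_0 t)\le\phi_\infty(t)+h(x)$ and $\phi_\infty(\beta_0 t)\le\phi(x,t)+h(x)$ on $\Omega$. We keep the \emph{same} $\phi_\infty$ for $\psi$ and extend $h$ by zero to $\Rn$; then we must verify the two inequalities for $\psi=f^{-1}$ at points $x\in\Rn\setminus\Omega$, which amounts to an inverse-function translation of \eqref{equ:A2'} combined with the \aoneo{\Rn}-type control $\beta^{|x-y|t^{1/n}}f(y,t)\le f(x,t)$ connecting $x$ to a nearby point of $\Omega$ — the relevant range of $t$ being bounded (namely $t\le s$ after inversion), the factor $\beta^{-|x-y|t^{1/n}}$ stays bounded, so it only worsens constants. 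Finally, if $\phi$ satisfies \ainc{p} (resp. \adec{q}), then $\phi^{-1}(y,\cdot)$ satisfies \adec{1/p} (resp. \ainc{1/q}) uniformly in $y$; since pointwise infima preserve \adec{1/p} with the same constant, and preserve \ainc{1/q} with the same constant as well (an infimum of functions $t\mapsto\phi^{-1}(y,t)/t^{1/q}$ each almost increasing with constant $a$ is again almost increasing with constant $a$ — here one must be a little careful, as infima do \emph{not} in general preserve \emph{almost} increasingness, so one instead passes through the honest monotone envelope or notes that the exponential weight, being increasing in $t$ for $\beta<1$, only helps for \ainc), the function $f$ inherits \adec{1/p} and/or \ainc{1/q}, hence $\psi=f^{-1}$ inherits \ainc{p} and/or \adec{q} by the inverse dictionary.

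\textbf{Main obstacle.} The delicate points are the two that conflict with taking infima: proving that $f$ satisfies \adec{1} (the weight $\beta^{-|x-y|t^{1/n}}$ does not individually respect \adec{1}, so one must see the \adec{1} of $f$ as coming from the infimum structure, i.e. from \adec{1} of the family $\{\beta^{-|x-y|t^{1/n}}\phi^{-1}(y,t)\}_y$ with a uniform constant — and verifying that uniform constant is itself the crux), and, relatedly, keeping the almost-increasing/almost-decreasing constants under control uniformly in $x$ when passing to the infimum and when chaining the \aoneo{} inequality across the boundary $\partial\Omega$. Left-continuity of the infimum in $t$ is a secondary technical nuisance requiring a monotone-convergence argument. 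Everything else is bookkeeping with the inverse-function dictionary.
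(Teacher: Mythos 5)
Your core idea---extending via the inverse, setting
\[
f(x,t)\;=\;\inf_{y\in\Omega}\beta^{-|x-y|\,t^{1/n}}\phi^{-1}(y,t)\qquad(t\ge1),
\]
and reading off the conditions for $\psi:=f^{-1}$ through the inverse dictionary---is exactly the paper's strategy, and your verification that $f\approx\phi^{-1}$ on $\Omega$ and that $f$ satisfies the \aoneo{\Rn}-type inequality is correct. But the proposal has a genuine gap at the point you yourself flag as ``the crux'': the claim that $f$ satisfies \adec{1}.

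You argue that $f$ inherits \adec{1} because ``infima preserve \adec{q} with the same constant,'' which is true \emph{provided each member of the family satisfies \adec{q} with a uniform constant.} But the individual functions $t\mapsto\beta^{-|x-y|\,t^{1/n}}\phi^{-1}(y,t)$ do \emph{not} satisfy \adec{1}: the weight $\beta^{-|x-y|\,t^{1/n}}$ grows faster than any power of $t$ whenever $|x-y|>0$, while $\phi^{-1}(y,t)/t$ only decays polynomially. Concretely, if $\dist(x,\Omega)=d>0$ and $\phi$ satisfies \azero{}, then $f(x,t)\gtrsim\beta^{-d\,t^{1/n}}$, so $f(x,t)/t\to\infty$ and \adec{1} fails outright; consequently $f$ is not the left-inverse of any $\Phi$-function and $\psi:=f^{-1}\notin\Phiw(\Rn)$. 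The paper addresses this with two devices you are missing: (a) the definition of $f$ for $t>1$ is capped by a minimum with $(\phi^-_\Omega)^{-1}(t)$, preventing the exponential blow-up for $x$ away from $\Omega$; and, more essentially, (b) even with that cap $f$ still fails \adec{1}, so the paper performs an explicit regularization
\[
g(x,t):=t^{1/p}\inf_{0<s\le t}\frac{f(x,s)}{s^{1/p}},
\]
which forces $\dec{1/p}$ (hence \adec{1}), and then checks that $g$ inherits \azero, \aone, $g|_\Omega\approx\phi^{-1}$, measurability, and \ainc{1/q} from $f$, before setting $\psi:=g^{-1}$. Without either the cap or the $g$-regularization, the proposed $\psi$ is not a $\Phi$-function.

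A secondary gap is in \atwo{}: you extend $f$ below $t=1$ linearly as $t\,f(x,1)$ and keep the same $\phi_\infty$, but the linear piece does not reproduce $\phi_\infty^{-1}$ near $0$, so the \eqref{equ:A2'}-type inequalities need not hold for $x\in\Rn\setminus\Omega$ (they hold only after the constants $h$ do the work, and you set $h\equiv0$ outside $\Omega$). The paper instead defines $f(x,t)=\beta_0^2\,\phi_\infty^{-1}(t)$ for $x\notin\Omega$ and $t\in[0,1]$, so that after regularization $\psi(x,\cdot)\simeq\phi_\infty$ for small arguments when $x\notin\Omega$, which is exactly what \eqref{equ:A2'} requires. (Your measurability remark---pass to a countable dense subset $\hat\Omega=\Omega\cap\Q^n$---matches the paper and is fine; your worry about infima not preserving almost-increasing is unfounded as long as the almost-increasing constant is uniform in the family, which it is here.)
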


\begin{proof}
Suppose first that there exists an extension $\psi\in \Phiw(\Rn)$ which
satisfies \azero, \aone{} and \atwo. Since $\phi\simeq \psi|_\Omega$ we find that $\phi$ 
satisfies \azero{} and \atwo{}.
Let $x,y\in \Omega$. Since $\psi$ satisfies \aone{} and $\Rn$ is quasi-convex, Lemma~\ref{lem:quasiconvex}
implies that $\psi$ satisfies \aoneo{\Rn}, so that 
$\beta^{t^{1/n} |x-y|+1} \psi^{-1}(y, t) \le  \psi^{-1}(x, t)$, 
Since $\psi|_\Omega \simeq \phi$, this implies \aoneo{\Omega} of $\phi$ by Lemma~\ref{lem:A1-Omega-invariance}. 
So we can move on to the converse implication. 

Let $\phi \in\Phiw(\Omega)$ satisfy \azero{}, \aoneo{\Omega}, \atwo{} and \ainc{p} 
for some $p \ge 1$. Then $\phi^{-1}$ satisfies \adec{1/p}. 
Let $\beta_0$ be a constant form \azero{} of $\phi$ 
and $\beta$ be from \aoneo{\Omega} of $\phi$. Next we use \atwo{}.
By \cite[Lemma~4.2.7]{HarH_book}, there exists $\phi_\infty\in \Phiw$ 
and $h\in L^1(\Omega)\cap L^\infty(\Omega)$ such that 
\[
\phi(x,\beta_2 t)\le \phi_\infty(t)+h(x)
\quad\text{and}\quad
\phi_\infty(t) \le \phi(x,\beta_2 t)+h(x)
\]
for almost every $x\in\Omega$ when $\phi_\infty(t)\le \beta_0$ and 
$\phi(x,t)\le \beta_0$, respectively. Note that $\phi_\infty$ is equivalent to 
$\liminf_{x\to\infty}\phi(x,\cdot)$; hence satisfies \azero{} and \adec{1/p} if $\phi$ does. 

Denote $\hat \Omega := \Omega\cap\Q^n$ and 
define $f: \Rn \times [0, \infty] \to [0, \infty]$ by 
\[
f(x,t) := 
\begin{cases}
%\phi^{-1}(x, t), \quad &\text{if} \quad  x\in \Omega \text{ and } t \in [0,1]; \\
%\beta_0^2(\phi^-_\Omega)^{-1}(t)\quad &\text{if} \quad  x\in \Rn \setminus \Omega \text{ and } t \in [0,1]; \\
\beta_0^2 \phi^{-1}(x, t)\chi_\Omega(x) + \beta_0^2 \phi_\infty^{-1}(t) \chi_{\Rn\setminus\Omega}(x)
 \quad &\text{if} \quad  t \in [0,1]; \\
\min \Big\{ (\phi^-_\Omega)^{-1}(t), \inf_{y\in \hat\Omega} \beta^{-|x-y| t^{1/n} } \phi^{-1}(y, t) \Big\}
\quad &\text{if} \quad  t \in (1, \infty);\\
\infty &\text{if} \quad t= \infty.
\end{cases}
\] 

The following properties follow directly from the corresponding properties of 
$\phi^{-1}$ and the definition of $f$:
\begin{enumerate}[label=(\arabic*)]
\item\label{f1} $f(x, t)=0$ iff $t=0$, and $f(x, t)=\infty$ iff $t=\infty$.%checkmark
\item\label{f5} $f$ satisfies \adec{1/p} in $[0,1]$.%\checkmark
\item\label{f7} $f$ satisfies \azero{}.%\checkmark
\end{enumerate}
For properties \azero{}--\atwo{} we think of $f$ as the inverse in the conditions, 
i.e.\ $\phi^{-1}$ is replaced by $f$, not $f^{-1}$, in the inequalities. 
We next prove the following properties of $f$:
\begin{enumerate}[label=(\arabic*)]
\addtocounter{enumi}{3}
\item\label{f2} $f$ is increasing. %\checkmark
\item\label{f3} $f$ is left-continuous and measurable. %\checkmark
\item\label{f6} $f|_\Omega \approx \phi^{-1}$. %\checkmark
\item\label{f8} $f$ satisfies \aone{}. %\checkmark
%\item\label{f9} $f$ satisfies \atwo{}. %\checkmark
\item\label{f10} $f$ satisfies \ainc{1/q} provided that $\phi$ satisfies \adec{q}.
\end{enumerate}

%$t \mapsto\phi^{-1}(x, t)$, 
%$t \mapsto\phi_\infty^{-1}(t)$ and $t \mapsto(\phi^-_\Omega)^{-1}(t)$
%$\phi^{-1}$, $\phi_\infty$ and $(\phi^-_\Omega)^{-1}$

%\ref{f1} These properties follow from the corresponding properties of $\phi^{-1}$. 
%%Let us first note by the definition that $f(x, t) =0$ if and only if $t=0$ and
%%$f(x, t) =\infty$ if and only if $t=\infty$.

\underline{Claim \ref{f2}:} 
Since $\phi^{-1}$, $\phi_\infty$ and $(\phi^-_\Omega)^{-1}$ are increasing, 
the claim follows in each subinterval. 
To show that $f(x, 1) \le f(x, t)$ for $1 < t$ we note that
$(\phi^-_\Omega)^{-1}(t) \ge (\phi^-_\Omega)^{-1}(1) \ge \beta_0$ and
$\beta^{-|x-y| t^{1/n} } \phi^{-1}(y, t) \ge \phi^{-1}(y, 1) \ge \beta_0$. 
Thus $f(x, t) \ge \beta_0$. On the other hand, 
$f(x, 1) \le \beta_0^2 (\phi^-_\Omega)^{-1}(1) \le \frac{\beta_0^2}{\beta_0} = \beta_0$.

\underline{Claim \ref{f3}:} 
Since $\phi^{-1}$, $\phi_\infty^{-1}$ and $(\phi^-_\Omega)^{-1}$ are left-con\-tin\-uous and the minimum of left-continuous functions is left-continuous, the claim follows by the definition
of $f$. 
By Lemma~2.5.12 of \cite{HarH_book}, $y \mapsto \phi^{-1}(y,t)$ is measurable. 
The infimum of measurable functions over countable sets is measurable. Thus $x \mapsto f(x, t)$ is measurable for every $t$.

%\ref{f5} By the definition $f$ satisfies \adec{1/p} in $[0,1]$.

\underline{Claim \ref{f6}:} 
We show that $f \approx \phi^{-1}$ in $\Omega$. For $t \in [0, 1]$ this holds 
by the definition of $f$. If $t>1$, then by \aoneo{\Omega} for $x, y\in \Omega$ we have
\[
\beta^{-|x-y| t^{1/n} } \phi^{-1}(y, t)
\ge \beta^{-|x-y| t^{1/n} } \beta^{|x-y| t^{1/n} +1}  \phi^{-1}(x, t)
= \beta \phi^{-1}(x, t).
\]
Since $(\phi^-_\Omega)^{-1} (t) \ge \phi^{-1}(x, t)$, this yields that 
$f(x,t) \ge \beta \phi^{-1}(x, t)$. On the other hand by \aoneo{\Omega} we have for $y\in \hat\Omega$ that 
\[
f(x, t) \le \beta^{-|x-y| t^{1/n} } \phi^{-1}(y, t) \le \beta^{-|x-y| t^{1/n} } \beta^{-|x-y| t^{1/n} -1}  \phi^{-1}(x, t) \to \tfrac1{\beta} \phi^{-1}(x, t)
\]
as $y \to x$ and hence $f(x, t) \le  \frac1{\beta} \phi^{-1}(x, t)$.

%\ref{f7} Since $\phi$ satisfies \azero, it is clear that $f$ satisfies it as well.

\underline{Claim \ref{f8}:}
Let us then prove \aone{}. 
The required inequality for $t=1$ follows from \azero{}, so we take  $t \in (1, \frac{1}{|B|}]$ 
and $x, y \in B$. Note that $|x-y|t^{1/n} \le c(n)$. 
Let $z_0 \in \hat \Omega$ be such that $\inf_{z\in \hat\Omega} \beta^{-|x-z| t^{1/n} } \phi^{-1}(z, t) \ge \frac 12  \beta^{-|x-z_0| t^{1/n} } \phi^{-1}(z_0, t)$. Then 
by the definition of infimum, the triangle inequality, and the choice of $z_0$ we obtain that 
\[
\begin{split}
\inf_{z\in \hat\Omega} \beta^{-|x-z| t^{1/n} } \phi^{-1}(z, t) 
&\le  \beta^{-|y-z_0| t^{1/n} } \phi^{-1}(z_0, t)
\le  \beta^{-(|y-x| + |x-z_0|) t^{1/n} } \phi^{-1}(z_0, t)\\
&\le 2 \beta^{- c(n)} \inf_{z\in \hat\Omega} \beta^{-|x-z| t^{1/n} } \phi^{-1}(z, t).
\end{split}
\]
The part $(\phi^-_\Omega)^{-1}$ satisfies \aone{} since it is independent of $x$. 
A short calculation shows that the minimum of two functions that satisfy \aone{} 
also satisfies it.

%\underline{Claim \ref{f9}:} 
%\atwo{} holds since $\Omega$ is bounded and outside $\Omega$ the 
%function $f$ is independent of $x$ for $t \in [0, 1]$.

\underline{Claim \ref{f10}:} 
Assume that $\phi$ satisfies \adec{q}. Then $\phi^{-1}$ and 
$\phi_\infty^{-1}$ satisfy \ainc{1/q}, which imply the condition for $f$ when $t \in [0, 1]$. 
For $1<t<s$, we use the condition for $\phi^{-1}$ as well as 
$\beta^{-|x-y| t^{1/n} } \le \beta^{-|x-y| s^{1/n} }$ to conclude that 
\[
\begin{split}
\frac{f(x, t)}{t^{1/q}} &= \min \Big\{ \frac{(\phi^-_\Omega)^{-1}(t)}{t^{1/q}}, \inf_{y\in \hat\Omega} \beta^{-|x-y| t^{1/n} } \frac{\phi^{-1}(y, t)}{t^{1/q}} \Big\}\\
&\lesssim  \min \Big\{ \frac{(\phi^-_\Omega)^{-1}(s)}{s^{1/q}}, \inf_{y\in \hat\Omega} \beta^{-|x-y| t^{1/n} } \frac{\phi^{-1}(y, s)}{s^{1/q}} \Big\}
\le \frac{f(x, s)}{s^{1/q}}.
\end{split}
\]
The case $1=t<s$ follows as $\epsilon\to 0^+$ since $f$ is increasing:
\[
\frac{f(x, 1)}{1^{1/q}} 
\le 
(1+\epsilon)^{1/q} \frac{f(x, 1+\epsilon)}{(1+\epsilon)^{1/q}} 
\lesssim 
(1+\epsilon)^{1/q} \frac{f(x, s)}{s^{1/q}}.
\]
%and for $1<s$ we obtain by \azero{} of $f$  \ref{f7} and \ainc{1/q} of $\phi^-$ that
%\[
%\begin{split}
%\frac{f(x, 1)}{1^{1/q}} & \le c  \lesssim \min \Big\{ \frac{(\phi^-_\Omega)^{-1}(1)}{1^{1/q}}, \inf_{y\in \hat\Omega} \beta^{-|x-y| 1^{1/n} } \frac{\phi^{-1}(y, 1)}{1^{1/q}} \Big\}\\
%&\lesssim  \min \Big\{ \frac{(\phi^-_\Omega)^{-1}(s)}{s^{1/q}}, \inf_{y\in \hat\Omega} \beta^{-|x-y| t^{1/n} } \frac{\phi^{-1}(y, s)}{s^{1/q}} \Big\}
%\le \frac{f(x, s)}{s^{1/q}}.
%\end{split}
%\]

\bigskip
The function $f$ does not satisfy \adec{1}, so it is not the inverse of any $\Phi$-function. 
We therefore make a regularization to ensure this growth condition. 
Recall that $\phi$ satisfies \ainc{p} for some $p\ge 1$. 
We define $g(x, 0):=0$, $g(x,\infty):= \infty$ and
\[
g(x, t) := t^{1/p} \inf_{0<s \le t} \frac{f(x, s)}{s^{1/p}}.
\]
Since $f$ satisfies \adec{1/p} on $[0,1]$, we have 
\begin{equation}\label{equ:g-pienet-arvot}
t^{1/p} \inf_{0<s \le t} \frac{f(x, s)}{s^{1/p}} \approx f(x,t)
\end{equation}
for $t\in[0,1]$, so that $g\approx f$ in the same range of $t$. 
From the corresponding properties of $f$ we conclude that 
$g$ is left-continuous and satisfies \azero{}. 

Let us prove the following properties for $g$:
\begin{enumerate}[label=(\roman*)]
%\item\label{g2} $g$ is increasing. 
%\item\label{g3} $g$ is left-continuous. 
\item\label{g4} $g$ is measurable. 
\item\label{g5} $g$ satisfies \dec{1/p}.
\item\label{g6} $g|_\Omega \approx \phi^{-1}$. 
%\item\label{g7} $g$ satisfies \azero{}.
\item\label{g8} $g$ satisfies \aone{}. 
\item\label{g1} $g(x, t)=0$ iff $t=0$, and $g(x, t)=\infty$ iff $t=\infty$. 
%\item\label{g9} $g$ satisfies \atwo{}. 
\item\label{g10} $g$ is increasing; and $g$ satisfies \ainc{1/q} provided that $\phi$ satisfies \adec{q}.
\end{enumerate}

%
%\ref{g2} Let us then show that $g$ is increasing. Let $0<t< s$, $\epsilon >0$ and choose $\sigma\le s$ 
%such that $g(x, s) \ge \frac{s^{1/p} f(x, \sigma)}{\sigma^{1/p}}- \epsilon$. 
%By the definition of $g$, $t\le \tau$, the monotonicity of $f$  
%and the definition of $\sigma$ we have
%\[
%g(x, t) 
%\le t^{1/p} \frac{f(x, \min\{t,\sigma\})}{\min\{t,\sigma\}^{1/p}} 
%\le  \frac{(t\sigma)^{1/p}}{\min\{t,\sigma\}^{1/p}}  \frac{f(x, \sigma)}{\sigma^{1/p}} 
%\le \frac{\max\{t,\sigma\}^{1/p}}{s^{1/p}} (g(x,s) + \epsilon).
%\]
%Thus letting $\epsilon \to 0^+$, we obtain that $g$ in increasing since $s\ge t,\sigma$.

%\ref{g3} Since $f$ is left-continuous \ref{f3} , we obtain that $g$ is left-continuous.

\underline{Claim \ref{g4}:} 
Since $f$ is left-continuous, we obtain that
\[
g(x, t) = t^{1/p} \inf_{s\in(0, t] \cap Q} \frac{f(x,s)}{s^{1/p}}.
\]
Since $x \mapsto f(x, t)$ is measurable, this implies that $x \mapsto g(x, t)$ 
is measurable as the infimum of countable many measurable functions.

\underline{Claim \ref{g5}:} 
Let us next show that $g$ satisfies \dec{1/p}. For that let $0<t<\tau$. By the definition of $g$ we obtain
\[
\frac{g(x,t)}{t^{1/p}} =  \inf_{0<s \le t} \frac{f(x, s)}{s^{1/p}} \ge \inf_{0<s\le \tau} \frac{f(x, s)}{s^{1/p}}
= \frac{g(x,\tau)}{\tau^{1/p}}.
\]

\underline{Claim \ref{g6}:}
Since $f \approx \phi^{-1}$ in $\Omega$ and $\phi^{-1}$ satisfies \adec{1/p}, 
we obtain $g|_\Omega\approx \phi^{-1}$ in the same way as in \eqref{equ:g-pienet-arvot}.%above for the case $t\in [0,1]$. 

%\ref{g7} Since $f\approx g$ in $t\in [0,1]$, we obtain \azero{}. 

\underline{Claim \ref{g8}:} 
Then we show that $g$ satisfies \aone{}. Let $B$ be a ball with $|B| \le 1$, 
$t \in [1, \frac1{|B|}]$ and $x, y \in \Omega$. Since $f$ satisfies \adec{1/p} in $(0, 1]$, 
we have
\begin{equation}\label{equ:g_alara}
g(y,t) \approx  t^{1/p}\inf_{1\le s \le t} \frac{f(y, s)}{s^{1/p}}. 
\end{equation}
Thus by \aone{} of $f$ we obtain
\[
g(x, t) \approx t^{1/p}\inf_{1\le s \le t} \frac{f(x, s)}{s^{1/p}} \lesssim  t^{1/p}\inf_{1\le s \le t} \frac{f(y, s)}{s^{1/p}}
\approx g(y, t).
\]

\underline{Claim \ref{g1}:} 
Using the inequality $f(x,1)\le f(x,s)\le f(x,t)$ and \azero{}, we obtain by \eqref{equ:g_alara} 
for $1<t$, that
\[
0< \beta_0 \le  g(x, t) \le t^{1/p} f(x, t) < \infty.
\]
By \eqref {equ:g-pienet-arvot}  we have $f\approx g$ for $s\in [0,1]$.
Thus by the corresponding property of $f$ we have 
$g(x, t)=0$ if and only if $t=0$, and $g(x, t)=\infty$ if and only if  $t=\infty$.

%\ref{g9} Since $f\approx g$ in $t\in [0,1]$, we obtain \atwo{}.

\underline{Claim \ref{g10}:}
Assume that $f$ satisfies \ainc{1/q} with constant $L\ge 1$. Let $0<t<s$, $\epsilon >0$ 
and choose $\theta \in (0,1]$ such that 
$g(x, s) \ge \theta^{-1/p} f(x,\theta s)- \epsilon$. Then 
by the definition of $g$, \adec{1/q} of $f$ and the choice of $\theta$, we obtain that 
\[
\frac{g(x, t)}{t^{1/q}} 
\le t^{1/p -1/q} \frac{ f(x, \theta t)}{(\theta t)^{1/p}} 
= \theta^{1/q-1/p} \frac{ f(x, \theta t)}{(\theta t)^{1/q}}
\le L\theta^{1/q-1/p} \frac{ f(x, \theta s)}{(\theta s)^{1/q}} 
\le L\frac{g(x, s)+ \epsilon}{s^{1/q}}.
\]
Thus letting $\epsilon \to 0^+$, we find that $g$ satisfies \ainc{1/q}.
If we take $L=1$ and $1/q=0$, this implies that $g$ is increasing (since $f$ is increasing).

\bigskip

We set $\psi:= g^{-1}$. 
Using \ref{g1}--\ref{g5}, increasing from \ref{g10} and Proposition~2.5.14 in \cite{HarH_book}  we obtain that 
$\psi \in \Phiw(\Rn)$ and  $\psi^{-1} = (g^{-1})^{-1}= g$. 
Since $\phi^{-1} \approx g= \psi^{-1}$, it follows that $\psi|_\Omega \simeq \phi$ 
\cite[Theorem~2.3.6]{HarH_book}. 
Now properties \azero{} and \aone{} for $\psi$ follow. 
Moreover, if $\phi$ satisfies \adec{q}, then by \ref{g10} $\psi$ satisfies it as well.

When $x\in \Rn\setminus \Omega$ and $t\in [0,1]$, we have 
$g(x,t)\approx f(x,t)=\beta_0^2\phi_\infty^{-1}(t)$. It follows that 
$\psi(x,t)\simeq \phi_\infty(t)$ for sufficiently small values of $t$ and $x$ as before. 
Therefore, $\psi$ satisfies the condition of \eqref{equ:A2'}, and 
so $\psi$ satisfies \atwo{}.
\end{proof}

%%%%%%%%%%%%%%%%%%%%%%%%%%%%%%%%%%%%%%%%%%%%%%%%%%%%%%%%%%%%%%%%%%%%%%%%%%%%%%
%%%%%%%%%%%%%%%%%%%%%%%%%%%%%%%%%%%%%%%%%%%%%%%%%%%%%%%%%%%%%%%%%%%%%%%%%%%%%%
%%%%%%%%%%%%%%%%%%%%%%%%%%%%%%%%%%%%%%%%%%%%%%%%%%%%%%%%%%%%%%%%%%%%%%%%%%%%%%

\bibliographystyle{amsplain}

\end{document}